\theoremstyle{plain}
\newtheorem{lemma}{Lemma}[section]
\newtheorem{proposition}[lemma]{Proposition}
\newtheorem{corollary}[lemma]{Corollary}
\theoremstyle{remark}
\newtheorem{remark}[lemma]{Remark}
\theoremstyle{definition}
\newtheorem{definition}[lemma]{Definition}
\newtheorem{example}[lemma]{Example}
\def\ce{\mbox{$\searrow \! \! \! \! \! ^e \: \: $}}
\def\ee{\mbox{$\nearrow \! \! \! \! \! \! \! ^e \ \ $}}
\def\k{\mathcal{K}}
\def\x{\mathcal{X}}
\def\L{\mathcal{L}}
\newcommand{\scp}{\mbox{$\ \searrow \! \! \! \! \searrow \! \! \! \! \ \ \ $}}
\newcommand{\esc}{\mbox{$\ \searrow \! \! \! \! \searrow \! \! \! \! ^e \ $}}
\begin{document}

\title{A note on the homotopy type of the Alexander dual}
\author[E.G. Minian]{El\'\i as Gabriel Minian}
\author[J.T. Rodr\'iguez]{Jorge Tom\'as Rodr\'iguez}

\address{Departamento  de Matem\'atica--IMAS\\
 FCEyN, Universidad de Buenos Aires\\ Buenos
Aires, Argentina.}

\email{gminian@dm.uba.ar} 
\email{jtrodrig@dm.uba.ar}

\begin{abstract}
We investigate the homotopy type of the Alexander dual of a simplicial complex. In general the homotopy type of $K$ does not determine the homotopy type of its dual $K^*$. Moreover, one can construct for each finitely presented group $G$, a simply connected simplicial complex $K$ such that $\pi_1(K^*)=G$. We study sufficient conditions on $K$ for $K^*$ to have the homotopy type of a sphere. We also extend the simplicial Alexander duality to the context of reduced lattices.
\end{abstract}

\subjclass[2000]{55U05, 55P15, 57Q05, 57Q10, 55M05, 06A06.}

\keywords{Alexander duality, simplicial complexes, lattices, homotopy types, finite topological spaces.}

\maketitle

\section{Introduction}
Let $A$ be a compact and locally contractible proper subspace of $S^n$. The classical Alexander duality theorem asserts that the reduced homology groups $H_i(S^n-A)$  are isomorphic to the reduced cohomology groups $H^{n-i-1}(A)$ (see for example \cite[Thm. 3.44]{ha}). The combinatorial (or simplicial) Alexander duality is a special case of the classical duality: if $K$ is a finite simplicial complex and $K^*$ is the Alexander dual with respect to a ground set $V\supseteq K^0$, then for any $i$
 
$$H_i(K)\cong H^{n-i-3}(K^*).$$

Here $K^0$ denotes the set of vertices (i.e. $0$-simplices) of $K$ and $n$ is the size of $V$. A very nice and simple proof of the combinatorial Alexander duality can be found in \cite{bt}. An alternative proof of this combinatorial duality can be found in \cite{ba}.

In these notes we relate the homotopy type of $K$ with that of $K^*$. We show first that, even though the homology of $K$ determines the homology of $K^*$ (and vice versa), the homotopy type of $K$ does not determine the homotopy type of $K^*$. Moreover, for any finitely presented group $G$, one can find a simply connected complex $K$ such that its Alexander dual, with respect to some ground set $V$, has fundamental group isomorphic to $G$. In the same direction, we exhibit an example of a complex with the homotopy type of a sphere whose dual is not homotopy equivalent to a sphere. However we prove that if $K$ simplicially collapses to the boundary of a simplex, then $K^*$ is homotopy equivalent to a sphere. To prove this result we use the nerve of the dual. We also use the nerve to find an easy-to-check sufficient condition for a complex to simplicially collapse to the boundary of a simplex.

In the last section of these notes we extend the duality to the context of reduced lattices. A reduced lattice is a finite poset with the property that any subset which is bounded below has an infimum. Any finite simplicial complex can be seen as a reduced lattice by means of its face poset. We define the Alexander dual for reduced lattices and show that the duality theorem remains valid in this context. When the poset is the face poset of a simplicial complex, the construction coincides with the simplicial one. 

\section{Preliminaries}

Let $K$ be finite simplicial complex and let $V$ be a set which contains the set $K^0$ of $0$-simplices of $K$. The Alexander dual of $K$ (with respect to the fixed set $V$) is the simplicial complex
$$K^*=\{\sigma\subset V,\ \sigma^c\notin K\}.$$
Here $\sigma^c=V\backslash\ \sigma$, the complement of $\sigma$ in $V$. It is clear that $K^{**}=K.$ Note that the set $V$ is implicit in the definition of the dual.

The simplicial Alexander dual $K^*$ allows us to investigate the homology of $K$ but in general the homotopy type of $K^*$ does not determine the homotopy type of $K$. Moreover, the fundamental group of $K^*$ does not provide information about the fundamental group of $K$. In fact, one can prove the following.

\begin{proposition}\label{piuno}
 For any given finitely presented group $G$, there exists a connected compact simplicial complex $K$ such that $\pi_1(K)=G$ and such that its Alexander dual $K^*$ with respect to any $V\supseteq K^0$ is simply connected.
\end{proposition}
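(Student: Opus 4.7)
The plan is to realize $G$ as the fundamental group of a $2$-dimensional simplicial complex, and then to exploit the fact that the Alexander dual of a low-dimensional complex inside a large enough ambient set $V$ automatically contains the full $2$-skeleton of $\Delta^{V}$, which is already simply connected. The simple-connectivity of $K^*$ will then be forced purely by dimension counting, regardless of how $G$ was encoded in $K$.

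First, I would invoke the classical construction: every finitely presented group $G=\langle x_1,\dots,x_m \mid r_1,\dots,r_n\rangle$ is the fundamental group of a finite connected $2$-dimensional CW-complex (the presentation complex), and a triangulation produces a connected simplicial $2$-complex $K$ with $\pi_1(K)\cong G$. By passing to a sufficiently fine barycentric subdivision if necessary, I may further assume that $|K^0|\geq 7$, so that every $V\supseteq K^0$ automatically satisfies $|V|\geq 7$.

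Next, I would carry out the dimension count on the dual. Because $K$ is $2$-dimensional, every simplex of $K$ has at most three vertices. Hence for any $\sigma\subseteq V$ with $|\sigma|\leq 3$, one has $|\sigma^c|\geq |V|-3\geq 4$, so $\sigma^c\notin K$, and therefore $\sigma\in K^*$. Thus $K^*$ contains every subset of $V$ of cardinality at most $3$; equivalently, the full $2$-skeleton of the simplex $\Delta^{V}$ on $V$ sits as a subcomplex of $K^*$.

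Finally, since $K^*\subseteq \Delta^{V}$, the $2$-skeleton of $K^*$ coincides with the $2$-skeleton of $\Delta^{V}$, which is simply connected (the $k$-skeleton of a simplex is $(k-1)$-connected). Because $\pi_1$ of a simplicial complex depends only on its $2$-skeleton, this gives $\pi_1(K^*)=0$. The only mild obstacle is arranging $|K^0|\geq 7$ at the outset; a routine subdivision (or adding redundant vertices inside an existing $2$-simplex) takes care of this without altering $\pi_1(K)\cong G$.
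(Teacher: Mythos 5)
Your proposal is correct and follows essentially the same route as the paper: realize $G$ by a connected $2$-dimensional complex with at least seven vertices, then observe by the same cardinality argument that $K^*$ must contain the full $2$-skeleton of the simplex on $V$, which forces simple connectivity. The only difference is that you spell out the skeleton argument in slightly more detail than the paper does.
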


\begin{proof}
Since $G$ is finitely presented, there exists a connected $2$-dimensional finite simplicial complex $K$ such that $\pi_1(K)=G$. We can suppose without loss of generality that $K$ has more than six vertices.
The dual of $K$, with respect to any $V\supseteq K^0$ contains the whole $2$-skeleton of the simplex spanned by $V$, since the complement of any subset of three elements of $V$ is not a simplex in $K$, by a cardinality argument. It follows that $K^*$ is simply connected.
\end{proof}

\begin{corollary}
For any finitely presented group $G$ there is a simply connected complex whose dual, with respect to some $V$, has fundamental group isomorphic to $G$.
\end{corollary}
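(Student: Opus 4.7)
The plan is to derive this corollary directly from Proposition \ref{piuno} by swapping the roles of $K$ and $K^*$. Given a finitely presented group $G$, the proposition produces a connected finite simplicial complex $K$ with $\pi_1(K) = G$ such that its Alexander dual $K^*$ (with respect to any ground set $V \supseteq K^0$) is simply connected.

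The key observation is the involutive nature of the Alexander dual construction: once the ground set $V$ is fixed, $K^{**} = K$. So I would fix some $V \supseteq K^0$ as in the proof of Proposition \ref{piuno} and set $L := K^*$. By construction $L$ is simply connected, and its Alexander dual with respect to $V$ is $L^* = K^{**} = K$, which has fundamental group $G$. This produces precisely the complex required by the statement.

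The only care needed is bookkeeping of the ground set: the vertex set $L^0$ of $L = K^*$ may be strictly smaller than $V$, so one should verify that taking the dual of $L$ with respect to $V$ (rather than with respect to $L^0$) is what recovers $K$, which is exactly the setting in which the identity $K^{**} = K$ was stated in the preliminaries. This is essentially a formal check from the definition $K^* = \{\sigma \subset V : \sigma^c \notin K\}$ and poses no real obstacle.

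Hence I do not expect any substantive difficulty; the entire content of the corollary is the duality between "simply connected with dual having $\pi_1 = G$" and "having $\pi_1 = G$ with simply connected dual", and Proposition \ref{piuno} supplies one side of this pair.
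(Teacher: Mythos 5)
Your proof is correct and is exactly the argument the paper intends: the corollary is stated as an immediate consequence of Proposition \ref{piuno} via the involution $K^{**}=K$, with the dual of $L=K^*$ taken with respect to the same ground set $V$. Your extra remark about keeping the ground set fixed is the right bookkeeping point and poses no difficulty.
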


In the same direction, the following example shows two homotopy equivalent simplicial complexes $K, L$ such that $K^0=L^0=V$ and such that their duals $K^*,L^*$ (with respect to $V$) are not homotopy equivalent.

\begin{example} 
Let $M$ be a triangulation of the Poincar\'e homology $3$-sphere and let $S$ be the boundary of a $4$-simplex whose vertices are contained in the set $V=M^0$. Similarly as in the proof of Proposition \ref{piuno},  since any triangulation $M$ of the homology $3$-sphere has more than $7$ vertices and $M$ and $S$ are $3$-dimensional, their duals $K=M^*$ and $L=S^*$ (with respect to $V$) are simply connected. Since $K$ and $L$ have the homology of a sphere $S^r$, it follows that they are in fact homotopy equivalent. Moreover, $K^0=L^0=V$ and their duals are respectively $M$ and $S$, which are not homotopy equivalent. 
\end{example}

In particular, the last example shows that the dual of a complex which is homotopy equivalent to a sphere need not be  homotopy equivalent to a sphere. The next lemma shows that, when we restrict ourselves to simplicial collapses, the duals preserve the homotopy type. We refer the reader to \cite{co} for the basic notions on simplicial collapses and expansions and simple homotopy types. As usual, we will denote an elementary  simplicial collapse by $K\ce L$ and, in general,  $K\searrow L$ will denote a simplicial collapse.

\begin{lemma} Let $L$ be a subcomplex of $K$ and let $V$ be a set containing $K^0$. Then $K\ce L$ if and only if $K^*\ee L^*$.
Consequently, if $K\searrow L$, then $K^*\nearrow L^*$.
\end{lemma}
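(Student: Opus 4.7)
The plan is to first establish an explicit bijection between the simplices removed in a collapse $K\searrow L$ and those added in the corresponding expansion $K^*\nearrow L^*$ by means of complementation, then to check that the free-face condition transfers under this bijection. The converse direction will follow formally from the involution $K^{**}=K$.

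First I would observe that if $L\subseteq K$ are subcomplexes of the simplex on $V$, then $K^*\subseteq L^*$ (since $\sigma^c\notin K$ implies $\sigma^c\notin L$), and complementation $\sigma\mapsto\sigma^c$ restricts to a bijection between $K\setminus L$ and $L^*\setminus K^*$: indeed, $\rho\in L^*\setminus K^*$ is exactly the statement that $\rho^c\in K$ and $\rho^c\notin L$.

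For the forward direction, assume $K\ce L$, so $K\setminus L=\{\sigma,\tau\}$ where $\tau$ is a free face of $\sigma$ in $K$, i.e.\ $\sigma$ is the unique proper coface of $\tau$ in $K$ (which forces $|\sigma|=|\tau|+1$ and $\sigma$ to be maximal). By the bijection above, $L^*=K^*\cup\{\sigma^c,\tau^c\}$ with $\sigma^c\subsetneq\tau^c$ a codimension-one inclusion. The crucial step is to verify that $\sigma^c$ is a free face of $\tau^c$ in $L^*$. For this I would take any $\rho\in L^*$ with $\sigma^c\subsetneq\rho$; then $\rho^c\subsetneq\sigma$, hence $\rho^c\in K$, and $\rho\in L^*$ forces $\rho^c\notin L$, so $\rho^c\in\{\sigma,\tau\}$; since $\rho^c\neq\sigma$, we get $\rho^c=\tau$, i.e.\ $\rho=\tau^c$. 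This shows $\tau^c$ is the unique proper coface of $\sigma^c$ in $L^*$, which is precisely the elementary expansion $K^*\ee L^*$.

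For the converse, I would use that $K^{**}=K$ (immediate from $(\sigma^c)^c=\sigma$) together with the tautology that $K^*\ee L^*$ is the same relation as $L^*\ce K^*$. Applying the forward direction already established to the pair $(L^*,K^*)$ yields $L=(L^*)^*\ee (K^*)^*=K$, i.e.\ $K\ce L$. The general statement $K\searrow L\Rightarrow K^*\nearrow L^*$ then follows by induction on the number of elementary collapses. The main obstacle is the free-face verification in the middle paragraph: one must correctly translate uniqueness of a coface of $\tau$ in $K$ into uniqueness of a coface of $\sigma^c$ in $L^*$, and it is essential to work in $L^*$ rather than $K^*$ since $\tau^c$ itself lies outside $K^*$. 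Once this bookkeeping with complements is set up, everything else is formal.
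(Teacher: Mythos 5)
Your proof is correct and follows essentially the same route as the paper's: the paper's entire proof is the one-line observation that complementation turns the removed pair $\{\tau,\sigma\}$ into the pair $\{\sigma^c,\tau^c\}$ with $\sigma^c$ a free face of $\tau^c$, which is exactly your middle paragraph with the details of the free-face verification written out. The converse via $K^{**}=K$ and the induction for general collapses are the same (implicit) formalities the paper relies on.
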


 \begin{proof}
 Note that if $L=K\backslash\{\tau,\sigma\}$ with $\tau$ a free face of $\sigma$, then $K^*=L^*\backslash\{\sigma^c,\tau^c\}$ with $\sigma^c$ a free face of $\tau^c$.
 \end{proof}
 
%We show first now that, in some particular cases, the Alexander %dual of a complex $K$ gives information on the homotopy type of %$K$.

Recall that the nerve $N(K)$ of a simplicial complex $K$ is the complex whose vertices are the maximal simplices (=facets) of $K$ and the simplices are the subsets of facets with non-empty intersection. It is well-known that $N(K)$ is homotopy equivalent to $K$.

\begin{lemma} Let $\dot\tau$ be the boundary of a simplex and let $V$ be a set such that $\tau^0\subsetneq V$. Then $(\dot\tau)^*$ is homotopy equivalent to the sphere $S^{n-1}$, where $n=\# V-\# \tau^0$. 
\end{lemma}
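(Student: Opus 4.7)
The plan is to compute the nerve $N((\dot\tau)^*)$ directly and recognize it as a triangulation of $S^{n-1}$, then invoke the homotopy equivalence $N(K)\simeq K$ recalled just before this lemma.

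First I would unwind the definition. Write $W := V\setminus \tau^0$, so $\#W = n$. A subset $\sigma\subseteq V$ lies in $(\dot\tau)^*$ iff $\sigma^c$ is not a proper subset of $\tau^0$, which is equivalent to saying that $W$ is not properly contained in $\sigma$. This description makes the facets transparent: $W$ itself is a facet (any strict superset of $W$ properly contains $W$ and so lies outside $(\dot\tau)^*$), and every other facet must fail to contain some $w\in W$, in which case maximality forces it to be $F_w := V\setminus\{w\} = \tau^0\cup (W\setminus\{w\})$. Thus $(\dot\tau)^*$ has exactly $n+1$ facets, namely $W$ together with the $F_w$ for $w\in W$.

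Next I would verify that the nerve on these $n+1$ vertices is the boundary of an $n$-simplex. The top face is absent, since
\[ W \cap F_{w_1}\cap\cdots\cap F_{w_n} \;=\; W\cap \tau^0 \;=\;\emptyset. \]
For every proper subcollection the intersection is nonempty: if $W$ is omitted then $\tau^0$ (nonempty because $\tau$ is a simplex) lies in $\bigcap_i F_{w_i}$; if some $F_{w_j}$ is omitted then the vertex $w_j$ lies in $W\cap \bigcap_{i\ne j} F_{w_i}$. Hence $N((\dot\tau)^*)$ is exactly the boundary of an $n$-simplex, a triangulation of $S^{n-1}$, and combining with $N((\dot\tau)^*)\simeq (\dot\tau)^*$ yields the conclusion.

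The step requiring the most care is the identification of the facets in the second paragraph; once they are listed, the intersection computations and the recognition of the nerve as $\partial\Delta^n$ are essentially routine bookkeeping.
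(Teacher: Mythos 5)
Your proof is correct and follows essentially the same route as the paper: identify the $n+1$ facets of $(\dot\tau)^*$, check that the total intersection is empty while every proper subcollection has nonempty intersection, and conclude that the nerve is the boundary of an $n$-simplex. Your explicit derivation of the membership criterion ($W$ not properly contained in $\sigma$) and of the facet list is a welcome elaboration of a step the paper only asserts.
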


\begin{proof}
%If $n=0$, $(\dot\tau)^*$ contains only the $-1$-simplex.
If $n=1$, $V=\tau^0\cup\{v\}$ and $(\dot\tau)^*$ is the disjoint union of the simplex $\tau$ and the vertex $v$. Then $(\dot\tau)^*$ is homotopy equivalent to $S^0$.

In general, if $V=\tau^0\cup\{v_1,\ldots,v_n\}$, $(\dot\tau)^*$ has $n+1$ maximal simplices, namely the simplices $\eta_i$ with vertex sets $\tau^0\cup\{v_1,\ldots,\hat{v_i},\ldots,v_n\}$, for $i=1,\ldots,n$, and $\eta_{n+1}$ whose vertex set is $\{v_1,\ldots,v_n\}$. The intersection of all these simplices is empty but any other intersection is non-empty. Then the nerve of $(\dot\tau)^*$ is the boundary of the $n$-simplex and therefore $(\dot\tau)^*$ is homotopy equivalent to $S^{n-1}$.
\end{proof}

\begin{corollary} 
If $K$ collapses to the boundary of a simplex, then $K^*$ is homotopy equivalent to a sphere.
\end{corollary}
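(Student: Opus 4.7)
The proof is a direct concatenation of the two lemmas stated immediately before the corollary, so the plan is essentially two lines.

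First, fix a ground set $V \supseteq K^0$ and suppose $K \searrow \dot\tau$ for some simplex $\tau$. By the preceding lemma on how the dual behaves under collapses, this implies a dual expansion $K^* \nearrow (\dot\tau)^*$, i.e. $(\dot\tau)^*$ simplicially expands to $K^*$. Since simplicial expansions preserve the (simple) homotopy type, we conclude that $K^*$ and $(\dot\tau)^*$ are homotopy equivalent.

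Second, note that $\dot\tau \subseteq K$ forces $\tau^0 \subseteq K^0 \subseteq V$. Provided $\tau^0 \subsetneq V$, which is the hypothesis of the immediately preceding lemma, that lemma computes $(\dot\tau)^* \simeq S^{n-1}$ with $n = \#V - \#\tau^0$. Chaining the two homotopy equivalences yields $K^* \simeq S^{n-1}$, which is the desired conclusion.

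I do not foresee any genuine obstacle: the corollary is a formal consequence of the two lemmas, and the only small subtlety is the degenerate case $V = \tau^0$, in which $(\dot\tau)^*$ collapses to the single empty simplex. This situation is implicitly excluded by the hypothesis of the second lemma (or may be interpreted as the case $S^{-1}$); one should perhaps mention that the ground set $V$ is chosen strictly larger than $\tau^0$, but otherwise the argument is immediate.
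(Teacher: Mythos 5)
Your proof is correct and is exactly the argument the paper intends: the corollary is stated without proof precisely because it follows by chaining the collapse--expansion lemma ($K\searrow\dot\tau$ implies $K^*\nearrow(\dot\tau)^*$, so $K^*\simeq(\dot\tau)^*$) with the computation $(\dot\tau)^*\simeq S^{n-1}$. Your remark about the degenerate case $V=\tau^0$ is a sensible point of care, and it is indeed ruled out by the hypothesis $\tau^0\subsetneq V$ of the preceding lemma.
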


We can use the nerve of the complex to find an easy-to-check sufficient condition for a complex to collapse to the boundary of a simplex. Note that in many cases, the nerve of a complex $K$  is much smaller than $K$. Moreover, in \cite{bm} it is proved that any complex $K$ \it strong collapses \rm to the square-nerve $N^2(K)=N(N(K))$. In particular, $K \searrow N^2(K)$. The strong collapses are easier to handle than the usual collapses. The concrete definition is the following.

\begin{definition}
Let $K$ be a complex and let $v\in K$ be a vertex. We denote by $K\smallsetminus v$ the full subcomplex of $K$ spanned by the vertices different from $v$ (the \textit{deletion} of the vertex $v$). We say that there is an \textit{elementary strong collapse} from $K$ to $K\smallsetminus v$ if the link of the vertex $lk(v, K)$ is a simplicial cone $v'L$, for some $v'$. In this case we say that $v$ is \textit{dominated} (by $v'$) and we denote $K \esc K\smallsetminus v$. There is a \textit{strong collapse} from a complex $K$ to a subcomplex $L$ if there exists a sequence of elementary strong collapses that starts in $K$ and ends in $L$. In this case we write $K \scp L$.
\end{definition}

%Figure \ref{strong} shows an elementary strong collapse.

\begin{figure}[h]
\centering
\includegraphics[width=6.00in,height=1.00in]{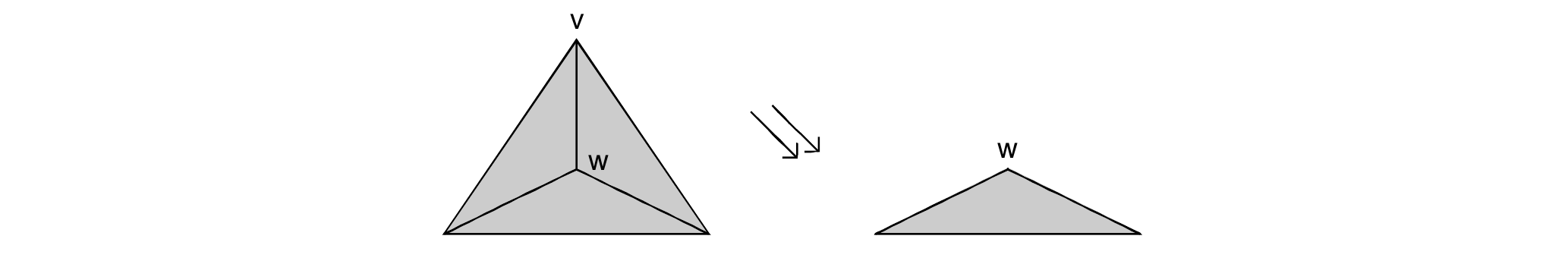}
\caption{An elementary strong collapse.}
\label{strong}
\end{figure}

It is easy to see that $K\scp L$ implies $K\searrow L$. We refer the reader to \cite{bm} for a comprehensive exposition on strong collapsibility and its relationship with simplicial collapsibility. The following lemma shows that this kind of collapses behaves well with respect to the nerve construction. 

\begin{lemma}\label{nerviocolapso}
If $K \scp L$, then $N(K)\scp N(L)$.
\end{lemma}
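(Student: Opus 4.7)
The plan is to reduce the statement, by induction on the length of the strong collapse, to the case of a single elementary step $K \esc K\smallsetminus v$, where $v$ is dominated in $K$ by some vertex $v'$. Given such a reduction, the goal is to locate inside $N(K)$ a family of vertices that can be removed one at a time by elementary strong collapses, leaving a subcomplex of $N(K)$ isomorphic to $N(K\smallsetminus v)$.

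First I would partition the facets of $K$. Let $\mathcal{A}$ be the facets of $K$ not containing $v$; these remain facets of $K\smallsetminus v$. Every facet of $K$ containing $v$ must also contain $v'$, because $lk(v,K)=v'L$ forces any face of the form $\sigma\smallsetminus\{v\}$ (with $\sigma$ a facet) to lie in the $v'$-cone part of $lk(v,K)$ by maximality of $\sigma$. Among the facets $\sigma$ that contain $v$, let $\mathcal{B}_2$ consist of those for which $\sigma\smallsetminus\{v\}$ is still a facet of $K\smallsetminus v$, and let $\mathcal{B}_1$ consist of the rest. A short argument using that $\sigma_1\smallsetminus\{v\}\subsetneq\sigma_2\smallsetminus\{v\}$ would force $\sigma_1\subsetneq\sigma_2$ when both contain $v$ shows that each $\sigma\in\mathcal{B}_1$ satisfies $\sigma\smallsetminus\{v\}\subsetneq\tau$ for some $\tau\in\mathcal{A}$, and that the facets of $K\smallsetminus v$ are precisely $\mathcal{A}\cup\{\sigma\smallsetminus\{v\}:\sigma\in\mathcal{B}_2\}$.

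The crux is to show that every $\sigma\in\mathcal{B}_1$ is dominated in $N(K)$ by the associated $\tau\in\mathcal{A}$. I would verify this directly: given facets $F_1,\dots,F_k$ with $\sigma\cap F_1\cap\cdots\cap F_k\neq\emptyset$, pick a vertex $w$ in this intersection. If $w\neq v$ then $w\in\sigma\smallsetminus\{v\}\subseteq\tau$, so $\tau$ meets the intersection as well. If the intersection equaled $\{v\}$, then each $F_i$ would contain $v$ and hence $v'$ by the domination of $v$ in $K$, while also $v'\in\sigma$, forcing $v'$ into the intersection, a contradiction. Taking $k=0$ gives in particular $v'\in\sigma\cap\tau$, so $\tau\in lk(\sigma,N(K))$. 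Because the argument is local to $\sigma$, $\tau$ and the $F_i$, it remains valid after any subset of $\mathcal{B}_1$ has been deleted; hence the vertices of $\mathcal{B}_1$ can be strong-collapsed one at a time, giving $N(K)\scp N(K)|_{\mathcal{A}\cup\mathcal{B}_2}$.

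Finally I would check that the map which is the identity on $\mathcal{A}$ and sends $\sigma\in\mathcal{B}_2$ to $\sigma\smallsetminus\{v\}$ is a simplicial isomorphism $N(K)|_{\mathcal{A}\cup\mathcal{B}_2}\cong N(K\smallsetminus v)$. Any intersection involving a facet in $\mathcal{A}$ is unaffected because that facet omits $v$, and an intersection among facets in $\mathcal{B}_2$ always contains $v'\neq v$, so removing $v$ preserves nonemptiness in both directions. Combining this isomorphism with the previous strong collapse yields $N(K)\scp N(K\smallsetminus v)$, completing the inductive step. The main obstacle is the domination claim of the third paragraph; the partition and isomorphism in the other paragraphs are essentially bookkeeping once one knows that the dominating vertex $v'$ is present in every facet of $K$ through $v$.
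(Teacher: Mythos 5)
Your proof is correct and follows essentially the same route as the paper's: reduce to an elementary step $K\esc K\smallsetminus v$, identify $N(K\smallsetminus v)$ with the full subcomplex of $N(K)$ spanned by the facets other than your $\mathcal{B}_1$, and show each $\sigma\in\mathcal{B}_1$ is dominated in $N(K)$ by a facet $\tau\supsetneq\sigma\smallsetminus\{v\}$ via the same two-case intersection argument (the paper packages the removal of the dominated vertices one at a time by citing Lemma 3.3 of Barmak--Minian, which you re-derive by hand). The only differences are expository: you make explicit the partition $\mathcal{A},\mathcal{B}_1,\mathcal{B}_2$ and the verification that the residual subcomplex is isomorphic to $N(K\smallsetminus v)$, which the paper states as ``easy to see.''
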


\begin{proof}
We may suppose that $K\esc L$, i.e. $L= K\setminus \{v\}$ with the vertex $v$  dominated by $w$.
Consider the simplicial map $f:N(L)\rightarrow N(K)$ defined in the vertices of $N(L)$ by 

\[
f(\sigma)= \left\{ \begin{array}{lcl}
                  \sigma & \mbox{ if } & \sigma \in N(K) \\
                    &             &     \\
                  v\sigma & \mbox{ if } & \sigma \notin N(K)
\end{array}
\right.
\]

It is easy to see that $\bigcap \sigma_i \neq \phi$ if and only if $\bigcap f(\sigma_i) \neq \phi$. Therefore we only need to prove that $N(K) \scp f(N(L))$. By \cite[Lemma 3.3]{bm}, it suffices to check that every vertex $\gamma\in N(K)\setminus f(N(L)$ is dominated by a vertex of $f(N(L))$.

Let $\gamma$ be a vertex in $N(K)\setminus f(N(L))$. Since $\gamma\notin f(N(L))$, then $\gamma=v\gamma'$ with $\gamma'$ not maximal in $L$. Therefore there exists $\tau \in L$ a maximal simplex with $\gamma'\subsetneq \tau$. We will show that $\gamma$ is dominated by $\tau$ in $N(K)$.

 Let $\{\sigma_0,...,\sigma_l\}\in lk(\gamma,N(K))$ (i.e. $\cap\sigma_i \bigcap \gamma\neq \phi$). We need to prove that $\cap\sigma_i \bigcap \tau\neq \phi$. If $v\in \cap\sigma_i \bigcap \gamma$, then $v\in \sigma_i$. Since $w$ dominates $v$ and $\sigma_i$ is maximal in $K$, we conclude that $w\in \sigma_i$ and therefore $w\in \cap\sigma_i \bigcap \tau$. If $v\notin \cap\sigma_i \bigcap \gamma$, then $\cap \sigma_i \bigcap \gamma \subseteq \gamma'$. Since $\gamma'\subsetneq \tau$, it follows that $\cap\sigma_i \bigcap \tau\neq \phi$

\end{proof}

Note that in general the previous lemma is not true for simplicial collapses.
% (ejemplo tomar un 2-simplex que colapsa sacandole una cara libre y el nervio del 2-simplex es un punto y el del otro es un palito).

\begin{corollary}
Let $K$ be a simplicial complex such that $N(K)\scp\dot\sigma$, where $\dot\sigma$ is the boundary of a simplex. Then $K\searrow \dot\sigma$, and therefore $K^*$ is homotopy equivalent to a sphere.
\end{corollary}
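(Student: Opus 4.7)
The plan is to reduce this corollary to the preceding one by showing that the hypothesis on $N(K)$ lifts back to the corresponding property for $K$ itself. The main conceptual observation is that the nerve of the boundary of a simplex is again the boundary of a simplex, so the class of ``complexes collapsing to $\dot\sigma$'' is stable under nerving in both directions.

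First I would verify that $N(\dot\sigma) \cong \dot\sigma$. If $\sigma$ has $n+1$ vertices, then $\dot\sigma$ has exactly $n+1$ facets, one for each deleted vertex. Any proper nonempty subcollection of these facets has nonempty intersection — precisely the set of vertices not removed by any facet in the subcollection — while the intersection of all $n+1$ facets is empty. Hence $N(\dot\sigma)$ has $n+1$ vertices and contains every proper subset as a simplex, so it is isomorphic to $\dot\sigma$.

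Next I would apply Lemma~\ref{nerviocolapso} to the hypothesis: from $N(K)\scp\dot\sigma$ we obtain $N^2(K)\scp N(\dot\sigma)\cong \dot\sigma$. Since every strong collapse is a simplicial collapse, this yields $N^2(K)\searrow\dot\sigma$. Combining with the result cited above from \cite{bm} that $K\searrow N^2(K)$, the two simplicial collapses concatenate to give $K\searrow\dot\sigma$. The final conclusion that $K^*$ is homotopy equivalent to a sphere is then immediate from the preceding corollary.

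The step I expect to require the most care is the concatenation at the end: $N^2(K)$ is not literally a subcomplex of $K$, so one must track the specific identification under which the collapse from \cite{bm} takes place, and confirm that $\dot\sigma$ (thought of as a subcomplex of $N^2(K)$ via the strong collapse above, then as a subcomplex of $K$ via the embedding from \cite{bm}) indeed sits inside $K$ in a way that makes the composed sequence of elementary collapses honest. Everything else reduces either to the two cited lemmas or to the elementary combinatorial identification $N(\dot\sigma)\cong\dot\sigma$.
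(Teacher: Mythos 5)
Your proposal is correct and follows essentially the same route as the paper: apply Lemma~\ref{nerviocolapso} to get $N^2(K)\scp N(\dot\sigma)=\dot\sigma$, combine with the strong collapse $K\scp N^2(K)$ from \cite{bm}, and conclude $K\searrow\dot\sigma$. Your explicit verification that $N(\dot\sigma)\cong\dot\sigma$ and your caution about the identification of $N^2(K)$ inside $K$ are reasonable extra care, but they do not change the argument.
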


\begin{proof}
By Lemma \ref{nerviocolapso}, $N(N(K))\scp N(\dot\sigma)=\dot \sigma$ and by \cite[Proposition 3.4]{bm},  $K\scp N^2(K)$. It follows that $K\scp \dot\sigma$ and, in particular, $K\searrow \dot\sigma$.
\end{proof}

%Note that the hypotheses of this result are easier to verify, since the nerve of the complex is in general smaller than the complex itself and strong collapses are much easier to handle than simplicial collapses.

\section{The duality in terms of reduced lattices}

\begin{definition}
A finite poset $X$ is called a \it reduced lattice \rm if every lower bounded set of $X$ has an infimum.
\end{definition}

Equivalently, a poset is a reduced lattice if and only if it is obtained from a finite lattice by deleting the maximum and the minimum.
Note that if $X$ is a reduced lattice, every upper bounded set has a supremum.  For example, the face poset $\x(K)$ of any finite simplicial complex $K$ is a reduced lattice.

\begin{definition}\label{te}
 Given a reduced lattice $X$, we denote by $m(X)$ the set of its minimal elements and by $T(X)$ the simplicial complex whose vertex set is $m(X)$ and whose simplices are the subsets of $m(X)$ which are bounded above.
\end{definition}

Note that this construction is related to the $\L$-complex defined  in \cite[Section 9.2]{b1}. In fact, $T(X)=\L(X^{op})$, the $\L$-complex of the opposite of $X$.

\begin{remark}\label{thomotopy}
It is clear that $T(\x(K))=K$ for any finite simplicial complex $K$. Moreover, by \cite[Section 9.2]{b1}, for any reduced lattice $X$, the complex $T(X)$ is homotopy equivalent to the standard order complex $\k(X)$ whose simplices are the non-empty chains of $X$.
\end{remark}

%Fijarme luego que hacemos con el complejo vacio y el simplex vacio y si hace falta o no definir: A reduced lattice $X$ is called \it simplex-like \rm if $m(X)$ is bounded above, or equivalently, if $T(X)$ is a simplex. Note that $\x(K)$ is simplex-like if and only if $K$ is a simplex.

\begin{definition} Given a reduced lattice $X$ and a set $V$ such that $m(X)\subseteq V$, we define its Alexander dual as the reduced lattice $\x(T(X)^*)$. Here $T(X)^*$ denotes the Alexander dual of the simplicial complex $T(X)$ with respect to the ground set $V$.
\end{definition}

By Remark \ref{thomotopy}, the simplicial Alexander duality immediately extends to this context as follows.

\begin{proposition}
Given a reduced lattice $X$ and a set $V$ such that $m(X)\subseteq V$, then for any $i$
$$H_i(X)\cong H^{n-i-3}(X^*),$$
where $n=\# V$.
\end{proposition}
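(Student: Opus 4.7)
The plan is to reduce the claim to the simplicial Alexander duality (stated in the introduction) applied to the complex $T(X)$, with the same ground set $V$. The whole argument is really a translation exercise via the comparison result in Remark \ref{thomotopy}, together with the identities $T(\mathcal{X}(K))=K$ and the fact that $\mathcal{K}(\mathcal{X}(K))$ is the barycentric subdivision of $K$.

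First I would unpack the conventions. For a reduced lattice $Y$, the (co)homology $H_*(Y)$ means the (co)homology of its order complex $\mathcal{K}(Y)$. By Remark \ref{thomotopy}, $\mathcal{K}(X)\simeq T(X)$, hence
$$H_i(X)\;=\;H_i(\mathcal{K}(X))\;\cong\;H_i(T(X)).$$
Next, for the dual side, $X^*$ is by definition $\mathcal{X}(T(X)^*)$, so
$$H^{n-i-3}(X^*)\;=\;H^{n-i-3}(\mathcal{K}(\mathcal{X}(T(X)^*))).$$
Since $\mathcal{K}(\mathcal{X}(K))$ is the barycentric subdivision of $K$, and therefore homeomorphic to $K$, the right-hand side equals $H^{n-i-3}(T(X)^*)$. (Equivalently, apply Remark \ref{thomotopy} to the reduced lattice $\mathcal{X}(T(X)^*)$, whose minimal elements are precisely the vertices of $T(X)^*$ and for which $T(\mathcal{X}(T(X)^*))=T(X)^*$.)

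I then need to check that the simplicial Alexander duality is actually applicable to $T(X)$ with ground set $V$: this requires $T(X)^0\subseteq V$, and by Definition \ref{te} the vertex set of $T(X)$ is exactly $m(X)$, which is contained in $V$ by hypothesis. Applying the simplicial Alexander duality to $T(X)$ with $n=\#V$ gives
$$H_i(T(X))\;\cong\;H^{n-i-3}(T(X)^*),$$
and chaining this with the two isomorphisms above yields $H_i(X)\cong H^{n-i-3}(X^*)$.

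There is no real obstacle here: the entire content of the proposition is that the definition $X^*=\mathcal{X}(T(X)^*)$ is set up exactly so that $T$ (which is homotopy-equivalent to $\mathcal{K}$) intertwines the lattice-theoretic dual with the simplicial one. The only point that requires a moment of care is verifying that ``homology of a reduced lattice'' on both sides of the isomorphism is computed by objects that are homotopy equivalent to the corresponding simplicial complexes $T(X)$ and $T(X)^*$ to which the classical duality can be applied — and this is precisely the content of Remark \ref{thomotopy}.
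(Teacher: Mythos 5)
Your argument is correct and is exactly the proof the paper intends: the paper dispatches the proposition with the single remark that, by Remark \ref{thomotopy}, the simplicial duality applied to $T(X)$ transfers to $X$, and your write-up simply fills in the routine identifications ($H_i(X)\cong H_i(T(X))$, $H^{*}(X^*)\cong H^{*}(T(X)^*)$ via the barycentric subdivision, and $T(X)^0=m(X)\subseteq V$). No discrepancies to report.
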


The (co)homology of a poset $X$ is the (co)homology of its associated order complex $\k(X)$. It is known that a finite poset is essentially a finite topological space and its homology groups coincide with the homology groups of the associated order complex (see \cite{b1,bm}). Therefore this result also can be used to investigate the topology of finite spaces.

\begin{remark} 
Since $K=T(\x(K))$, this version of the duality extends the simplicial version. Note also that in general $X^{**}\neq X$, unless $X=\x(K)$ for some simplicial complex $K$. In fact, $X^{**}=\x(T(X))$.
\end{remark}

\begin{example} Figure \ref{reduced} shows a reduced lattice $X$, which is not the face poset of a complex,  and its dual $X^*$.

\begin{figure}[h]
\begin{minipage}{0.5\linewidth}

  \begin{displaymath}
  \xymatrix@C=10pt{
  & & &  \bullet \ar@{-}[dl] \ar@{-}[dr]\\
  & & \bullet \ar@{-}[dl] \ar@{-}[d] \ar@{-}[dr] & & \bullet  \ar@{-}[dr] \ar@{-}[d] \ar@{-}[dl]  &  \bullet \ar@{-}[ddrrr] \ar@{-}[dllll] &	\bullet \ar@{-}[dr] \ar@{-}[dl] \\
  &  \bullet \ar@{-}[dl] \ar@{-}[dr] & \bullet \ar@{-}[dll] \ar@{-}[drr] & \bullet \ar@{-}[dl] \ar@{-}[dr] & \bullet \ar@{-}[dll] \ar@{-}[drr] & \bullet \ar@{-}[dl] \ar@{-}[dr] & &\bullet \ar@{-}[dr] \ar@{-}[dlll] \\
    \bullet a & & \bullet b & & \bullet c & & \bullet d & & \bullet e}
  \end{displaymath}  
      \hspace{100pt} $X$
\end{minipage}\noindent

\begin{minipage}{0.5\linewidth}
\vspace{9pt}
  \begin{displaymath}
  \xymatrix@C=10pt{
\bullet \ar@{-}[d] \ar@{-}[drr] & & \bullet \ar@{-}[d] \ar@{-}[drr] & & \bullet \ar@{-}[d] \ar@{-}[drr]  & & \bullet \ar@{-}[d] \ar@{-}[dllllll] \\
    \bullet a & & \bullet c & & \bullet b & & \bullet d}
  \end{displaymath}  
      \hspace{100pt} $X^*$
\end{minipage}
\caption{A reduced lattice and its dual.\label{reduced}}
\end{figure}
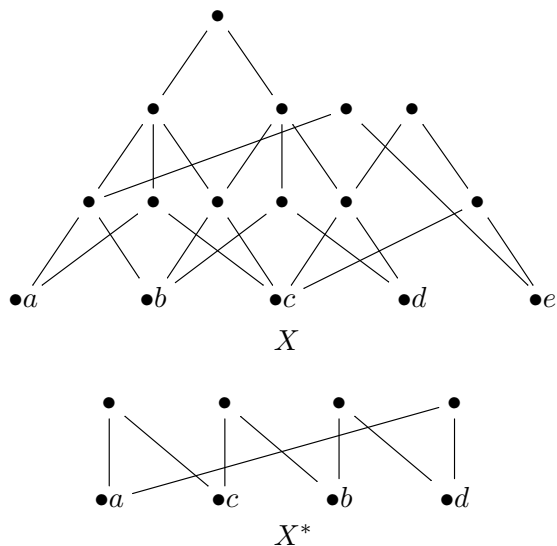

\end{example}

%{\bf Acknowledgement.}
%We would like to thank Jonathan Barmak for his useful comments.

\end{document}